\documentclass{amsart}
\usepackage{graphicx}
\usepackage{graphicx}
\usepackage{color}
\usepackage{times}
\usepackage{cite}
\usepackage{url}
\usepackage{enumerate,latexsym}
\usepackage{latexsym}
\usepackage{amsmath,amssymb}
\usepackage{graphicx}
\usepackage{amsthm}
\usepackage{verbatim}
\newtheorem{thm}{Theorem}[section]
\newtheorem*{theorem*}{Theorem}
\newtheorem*{acknowledgement*}{Acknowledgement}
\newtheorem{cor}[thm]{Corollary}

\newtheorem{lem}[thm]{Lemma}
\newtheorem{prop}[thm]{Proposition}
\theoremstyle{definition}

\theoremstyle{remark}
\newtheorem{rem}[thm]{Remark}

\numberwithin{equation}{section}

\newcommand{\set}[1]{\left\{#1\right\}}
\newcommand{\Real}{\mathbb R}

\title{Orientable minimal submanifolds of low index in the sphere} 
 \author{Jacob Bernstein}\address{Johns Hopkins University \\ 3400 N. Charles St, Baltimore MD 21218 USA}
	\email{jberns15@jhu.edu}
\author{Daniel Ketover}\address{Rutgers University\\  Busch Campus - Hill Center \\ 110 Freylinghausen Road, Piscataway NJ 08854 USA}

 \email{dk927@math.rutgers.edu}

\begin{document}
\maketitle
\begin{abstract}
In this short note we prove a rigidity result for orientable minimal submanifolds in the round sphere of the lowest non-trivial Morse index.  In particular,  we show that the only closed and connected $(2m-2)$-dimensional orientable minimal submanifolds in $\mathbb{S}^{2m}$ of index less than or equal to $2m+1$ are the totally geodesic spheres.
\end{abstract}

\section{Introduction}
We study minimal submanifolds in the round sphere with low Morse index.   Our focus is on closed oriented submanifolds of codimension two.  Simons \cite{simonsMinimalVarietiesRiemannian1968} showed the totally geodesic spheres $\mathbb{S}^k\subset \mathbb{S}^{n}\subset \Real^{n+1}$ possess the lowest possible Morse indices, namely $n-k$, and they are characterized by this property.  This follows from the fact that the conformal vector fields, i.e., the tangential  projection to $\mathbb{S}^n$ of parallel vector fields on $\Real^{n+1}$, are area non-increasing deformations of any closed minimal submanifold.
A further consequence is that any closed smooth non-spherical minimal submanifold in $\mathbb{S}^n$ has index at least $n+1$. 

In general this bound is sharp, the Veronese, $\Sigma_{\mathbb{RP}}$, is the non-orientable minimal surface given by an explicit codimension two embedding 
$$
\mathbb{RP}^2\to\Sigma_\mathbb{RP}\subset  \mathbb{S}^4.
$$
It has Morse index five and this characterizes it among minimal projective planes \cite{karpukhinIndexMinimalSpheres2021, kusnerIndexMinimal2tori2024}.
In contrast, we show that in even dimensional spheres this bound is never achieved by any orientable minimal submanifold of codimension two.   In odd dimensions our methods give a weaker conclusion -- namely,  if an orientable minimal submanifold of codimension two achieves the bound, then, after rotating, it is the link of a minimal cone that is a complex submanifold.  We do not address whether this can occur in a non-trivial way.

More precisely, our main result is the following:
\begin{thm}\label{MainThm}
	For $n\geq 3$, let $\Sigma^{n-2}\subset \mathbb{S}^n$ be a closed smooth oriented and connected minimal submanifold, possibly immersed.  One of the following holds:
	\begin{enumerate}
		\item $\Sigma$ is a totally geodesic sphere of Morse index two;
		\item $n\geq 5$ is odd, the Morse index of $\Sigma$ is $n+1$ and, away from the origin, $C(\Sigma)$, the cone over $\Sigma$,  is a complex, possibly immersed, submanifold         for some parallel complex structure on $\Real^{n+1}$;
		\item $\Sigma$ has Morse index at least $n+2$.
	\end{enumerate}
\end{thm}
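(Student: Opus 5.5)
Suppose $\Sigma$ is not totally geodesic and has index exactly $n+1$; by Simons' result this is the smallest possible index for a non-totally-geodesic submanifold. We will show that then $n$ is odd, $n\ge 5$, and $C(\Sigma)$ is complex for some parallel complex structure. This gives the trichotomy.

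\emph{Step 1: the conformal fields span the negative space.} For $v\in\Real^{n+1}$, the normal projection $v^\perp$ of the constant field $v$ onto $N\Sigma\subset T\mathbb{S}^n$ satisfies $Q(v^\perp,v^\perp)=-k|v^\perp|^2$-type identities. The classical computation gives $L v^\perp = (n-2)v^\perp$, where $L$ is the Jacobi operator. So $V=\{v^\perp : v\in\Real^{n+1}\}$ is an $(n+1)$-dimensional negative subspace; the map $v\mapsto v^\perp$ is injective because $\Sigma$ is full in a suitable sense once it is not a great sphere. Index $n+1$ then forces $V$ to be the entire negative eigenspace.

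\emph{Step 2: a second family of test sections built from the orientation.} The normal bundle $N\Sigma$ is an oriented rank-two bundle, since $\Sigma$ and $\mathbb{S}^n$ are oriented. It therefore carries a rotation $J$ by $\pi/2$, which commutes with the normal connection. Consider the sections $J v^\perp$. Rotation by $J$ preserves the normal Laplacian. For the curvature term $\mathcal{A}(\xi)=\sum_{ij}\langle A_{ij},\xi\rangle A_{ij}$, one has
\[
\langle \mathcal A(J\xi),J\xi\rangle+\langle\mathcal A(\xi),\xi\rangle=|A|^2|\xi|^2
\]
in codimension two. Summing over an orthonormal basis of $v$ (or averaging over $v\in\mathbb{S}^n$) and using $\sum_v |v^\perp|^2=2$, I expect
\[
\sum_v Q(Jv^\perp,Jv^\perp)=\sum_v Q(v^\perp,v^\perp)=-2(n-2)\,\mathrm{Area}<0.
\]
So the space $JV$ must contain negative directions. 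The main point is that the $Q$-orthogonal complement of $V$ is nonnegative, so this forces $JV\subset V$ modulo zero directions. Concretely, I would write $Jv^\perp = (Tv)^\perp + w$ with $w$ $Q$-orthogonal to $V$, take traces, and compare: the trace identity should force $w$ into the kernel with $\sum Q(w,w)=0$, hence $Jv^\perp=(Tv)^\perp$ for a linear map $T$ of $\Real^{n+1}$. Making this trace bookkeeping rigorous is the main obstacle. The difficulty is that $V$ is an eigenspace of $L$ only in the pointwise sense of $L v^\perp=(n-2)v^\perp$, and I need that $JV$ lies in the span of $V$ together with nonnegative modes. I would try the min-max characterization: the restriction of $Q$ to $V+JV$ has at most $n+1$ negative eigenvalues, combined with the trace identity above.

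\emph{Step 3: from $T$ to a complex structure.} Since $J^2=-1$ and $v\mapsto v^\perp$ is injective, we get $T^2=-I$. Since $J$ is an isometry, $T$ is orthogonal. So $T$ is a parallel orthogonal complex structure on $\Real^{n+1}$, which forces $n+1$ to be even, i.e. $n$ odd. The relation $Jv^\perp=(Tv)^\perp$ at each point says that $T$ preserves $N_p\Sigma$ and acts as $J$ there. Because $T$ is orthogonal, it then also preserves $(N_p\Sigma)^\perp=T_p C(\Sigma)$, so the cone is $T$-complex away from the origin. Finally, $n=3$ is excluded: in that case $\Sigma$ is a curve and $C(\Sigma)$ is a complex curve in $\mathbb{C}^2$ that is a cone, hence a complex line, so $\Sigma$ is a great circle. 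The same reasoning shows that complex cones with totally geodesic link fall under case (1).
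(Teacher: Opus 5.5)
You follow the paper's route: rotate the conformal sections by $J$, sum over an orthonormal basis using $J\xi\cdot\tilde A(J\xi)+\xi\cdot\tilde A(\xi)=|A|^2|\xi|^2$, deduce $JV\subset V$, and finish with linear algebra. The gap is the decisive step, from the trace identity to $JV\subset V$. You do not carry it out, and the outcome you anticipate would not suffice. If the trace identity only gave $\sum_v Q(w_v,w_v)=0$ with $w_v$ in the kernel, nonzero Jacobi fields $w_v$ would remain possible, and $Jv^\perp=(Tv)^\perp$ would not follow.

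There is no ``pointwise only'' subtlety. The $v^\perp$ are genuine eigensections, and with index $n+1$ they span the whole negative eigenspace. So $Q\ge 0$ on the $L^2$-orthogonal complement of $V$, which is also all your min--max count on $V+JV$ gives.

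What you are missing is to keep track of $L^2$ mass. Write $Jv^\perp=(Tv)^\perp+w_v$ with $w_v\perp_{L^2}V$. Every element of $V$ has eigenvalue exactly $-(n-2)$, and $|Jv^\perp|=|v^\perp|$ pointwise. Hence $Q((Tv)^\perp,(Tv)^\perp)=-(n-2)\bigl(\|v^\perp\|_{L^2}^2-\|w_v\|_{L^2}^2\bigr)$, and the cross term vanishes. With $w_i=w_{E_i}$, the trace identity becomes
\[
\sum_{i=1}^{n+1}\left(Q(w_i,w_i)+(n-2)\|w_i\|_{L^2}^2\right)=0 .
\]
Both terms are nonnegative for $n\ge 3$, so every $w_i=0$. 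The $(n-2)\|w_i\|^2$ term is what kills $w_i$. Equivalently, as in the paper: $-(n-2)$ is the bottom of the spectrum of $-\mathcal{L}_\Sigma$, so $Q_\Sigma[W,W]\ge -(n-2)\int_\Sigma |W|^2$ with equality iff $W\in V$. The trace identity then forces equality for each $JE_i^\perp$.

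In Step 3, two assertions do not follow as stated. The relation $(Tv)^\perp=Jv^\perp$, together with $J$ being a pointwise isometry, only controls the normal component of $Tv$. It therefore gives neither $|Tv|=|v|$ nor $T(N_p\Sigma)\subset N_p\Sigma$. What it gives immediately is that $T$ preserves $(N_p\Sigma)^\perp=T_pC(\Sigma)$, since $v^\perp(p)=0$ implies $(Tv)^\perp(p)=0$. Combined with $T^2=-I$, which follows from injectivity of $v\mapsto v^\perp$, this already shows that $n$ is odd and that $C(\Sigma)$ is $T$-complex. Your treatment of $n=3$ is fine. If you want $T$ to be an orthogonal (Hermitian) complex structure, you need a separate argument; the paper gives a separate pointwise argument for this, using the splitting $\Real^{n+1}=N_{p_1}\Sigma\oplus\cdots\oplus N_{p_m}\Sigma$.
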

\begin{rem}
We can say a little bit more, namely, that either  the stability operator,  $-\mathcal{L}_\Sigma$, admits an eigensection with eigenvalue $<-(n-2)$  and the index is at least $n+2$ or:
	\begin{enumerate}
		\item $\Sigma$ is a totally geodesic sphere;
		\item  The curvature term in the stability operator, $\tilde{A}$, has the form $\tilde{A}=\frac{1}{2}|A|^2 \tilde{I}$.
	\end{enumerate}
	When $n=4$, Case (2) occurs only when $\Sigma$ is the double cover of the Veronese.  This cover is an immersed minimal two-sphere of index ten considered by Ejiri \cite{ejiriIndexMinimalImmersions1983}.
\end{rem}
In $\mathbb{S}^4$ this bound is sharp:
\begin{cor} If $\Sigma$ is a closed, connected, oriented minimal surface in $\mathbb{S}^4$ that is not a totally geodesic $\mathbb{S}^2$, then the index is at least six and this bound is achieved by a Clifford torus inside a totally geodesic $\mathbb{S}^3$.
\end{cor}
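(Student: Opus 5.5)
The corollary has two parts: the lower bound six, and its attainment by the Clifford torus. I plan to get the lower bound directly from Theorem \ref{MainThm} with $n=4$, and to prove sharpness by computing the index of the Clifford torus in $\mathbb{S}^4$ through a splitting of its normal bundle.

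\textbf{Lower bound.} Take $n=4$ in Theorem \ref{MainThm}, so that $\Sigma$ is a closed, connected, oriented minimal surface in $\mathbb{S}^4$. Alternative (1) is excluded by hypothesis, since $\Sigma$ is not a totally geodesic $\mathbb{S}^2$. Alternative (2) requires $n\geq 5$ odd, so it cannot occur when $n=4$. Hence alternative (3) holds, and the Morse index is at least $n+2=6$.

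\textbf{Sharpness.} Let $T\subset\mathbb{S}^3\subset\mathbb{S}^4$ be the Clifford torus, where $\mathbb{S}^3$ is totally geodesic. Because $\mathbb{S}^3$ is totally geodesic, the normal bundle of $T$ in $\mathbb{S}^4$ splits orthogonally as $N_1\oplus N_2$. Here $N_1=\nu$ is the normal line of $T$ in $\mathbb{S}^3$, and $N_2$ is spanned by the unit normal $e$ of $\mathbb{S}^3$ in $\mathbb{S}^4$. The second fundamental form of $T$ takes values in $N_1$ only, and the ambient curvature term is diagonal with respect to this splitting. The normal connection also respects the splitting, with each factor flat and trivial. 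Therefore the Jacobi operator decouples, and
$$
\mathrm{ind}_{\mathbb{S}^4}(T)=\mathrm{ind}(-\Delta-|A|^2-2)+\mathrm{ind}(-\Delta-2),
$$
where $|A|^2=2$ for the Clifford torus.

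Each summand is counted using the spectrum of the flat square torus $T=S^1(1/\sqrt2)\times S^1(1/\sqrt2)$. Its Laplace eigenvalues are $2(a^2+b^2)$ for $a,b\in\mathbb{Z}$, namely $0$ with multiplicity $1$, then $2$ with multiplicity $4$, then $4$ with multiplicity $4$, and so on.
\begin{itemize}
\item The first operator is $-\Delta-4$, whose negative eigenvalues come from the eigenvalues of $-\Delta$ below $4$. These are $0$ and $2$, giving index $1+4=5$, which is the known index of the Clifford torus in $\mathbb{S}^3$.
\item The second operator is $-\Delta-2$, whose only negative eigenvalue comes from the constant functions. This gives index $1$, corresponding to the conformal field $e$.
\end{itemize}
The total index is $6$, which matches the lower bound.

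The step needing the most care is justifying the decoupling: checking that the normal Jacobi operator is diagonal in $N_1\oplus N_2$. This follows because the curvature term $\tilde A(V)=\sum_{i,j}\langle A_{ij},V\rangle A_{ij}$ vanishes on the $N_2$ direction and maps $N_1$ into itself, and because $e$ is parallel in the normal bundle, since $\mathbb{S}^3$ is totally geodesic.
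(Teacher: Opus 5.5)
Your proposal is correct and matches the paper's reasoning. The lower bound is exactly Theorem \ref{MainThm} with $n=4$: alternative (2) is ruled out by parity and alternative (1) by hypothesis. For sharpness, the paper only asserts that the Clifford torus attains six, relying on its known index five in $\mathbb{S}^3$ plus one extra normal direction; your splitting of the normal bundle as $\nu\oplus e$ and the spectral count $5+1$ is a correct explicit check of that.
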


For hypersurfaces, an analogous result was established by Perdomo \cite{perdomoLowIndexMinimal2001} in all dimensions. For minimal two-spheres in $\mathbb{S}^4$ the result follows from a more general classification of Ejiri \cite{ejiriIndexMinimalImmersions1983} and, more generally,  for superminimal surfaces in $\mathbb{S}^4$ it follows from work of Montiel-Urbano \cite{MontielUrbano}.  For minimal tori in $\mathbb{S}^4$ it is a special case of a  result of Kusner-Wang \cite{kusnerIndexMinimal2tori2024}.  For oriented minimal surfaces in $\mathbb{S}^3$, Urbano \cite{Urbano1990} obtained stronger rigidity: the only minimal surfaces of index less than or equal to five were the Clifford torus, with index five, and the equatorial spheres of index one.  Similarly, Kusner-Wang \cite{kusnerIndexMinimal2tori2024} showed that a minimal torus in $\mathbb{S}^4$ with index at most six is a Clifford torus inside a totally geodesic $\mathbb{S}^3$.

The proof of Theorem \ref{MainThm} uses the existence of a parallel almost complex structure, i.e., a well defined rotation by $90^\circ$, on the normal bundle to $\Sigma$ in an elementary way.  Specifically, it allows one to try and find an additional unstable direction beyond those given by the conformal vector fields by applying this rotation to the conformal vector fields and averaging -- see Lawson-Simons \cite{lawsonStableCurrentsTheir1973} for a related strategy.

When $C(\Sigma)$ is a complex variety it is area minimizing (and hence stable).  In Proposition \ref{LargeIndexVarietyProp}, we construct an example of an immersed $\Sigma$ so $C(\Sigma)$ is an immersed complex submanifold away from the origin, but the Morse index of $\Sigma$ is strictly greater than $n+1$.  It would be interesting to understand how restrictive having low index is in odd dimensional spheres.
Finally, it would be interesting to understand what occurs when $\Sigma$ has (mild) singular set.
  For instance, one expects the spherical suspensions of $\Sigma_\mathbb{RP}$ inside of $\mathbb{S}^{n}$ (which have codimension three singular set) to have Morse index five -- cf. \cite{lawsonStableCurrentsTheir1973}.

\emph{Acknowledgements}:  J.B was partially supported by the NSF grant DMS-2203132.  D.K. was partially supported by NSF grant DMS-2405114.  J.B. would like to thank Zhihan Wang for his interest in this work and for making him aware that an LLM had independently arrived at a proof of some form of the main result.

\section{Background}
We collect some background material.
\subsection{Orientability of $N\Sigma$}

Let $\Sigma\subset \mathbb{S}^{n}$ be a $(n-2)$-dimensional oriented and connected submanifold of $\mathbb{S}^{n}$.  For any $p\in \Sigma$, we can define an endomorphism 
$$J_N:N_p\Sigma\to N_p\Sigma$$
as follows.  For $\nu \in N_p \Sigma$ of unit length let 
$J_N(\nu)$
denote the unique section of the normal bundle so that for any oriented orthonormal basis basis $E_1, \ldots, E_{n-2}$ of $T_p\Sigma$ the collection
$$
\{E_1, \ldots, E_{n-2}, \nu, J_N(\nu)\}
$$
forms an oriented orthonormal basis of $T_p \mathbb{S}^{n}$.
One readily checks that $J_N$ is parallel with respect to the natural connection induced from the sphere.

\subsection{Stability operator of a minimal submanifold}

The stability operator on $\Sigma\subset \mathbb{S}^n$, a $k$-dimensional minimal submanifold, 
is the differential operator acting on sections of the normal bundle, $N\Sigma$, given by
$$
\mathcal{L}_\Sigma=\Delta^\perp +k \tilde{I}+\tilde{A}.
$$
Here $\Delta^\perp$ is the normal Laplacian, $\tilde{I}$ is the identity endomorphism and $\tilde{A}$ is a curvature term. 
For $W$ is a local section of $N\Sigma$ and $E_1, \ldots, E_{k}$ a local orthonormal frame of $T\Sigma$
$$
\Delta^\perp W=\sum_{i=1}^{k}\left( \nabla^\perp_{E_i} \nabla^\perp_{E_i} W- \nabla^\perp_{\nabla_{E_i} E_i} W\right).
$$
The curvature term $\tilde{A}$  satisfies, for sections $V$ and $W$ of the normal bundle,
$$
V\cdot \tilde{A}(W)= \langle A_V, A_W\rangle
= \sum_{i=1}^{k} (V\cdot {A} (E_i,E_i)) (W\cdot {A} (E_i, E_i))
$$
where $A_V$
is the second fundamental form
in the direction $V$.

Associated to $\mathcal{L}_\Sigma$ is the bilinear form satisfying
$$
Q_\Sigma[W, W]=-\int_{\Sigma} W (\mathcal{L}_\Sigma W) = \int_\Sigma |\nabla^\perp W|^2 -k |W|^2 -W\cdot \tilde {A}(W).
$$
The \emph{Morse index}, $\mathrm{Ind}(\Sigma)$, of $\Sigma$ is the maximal dimension of space of sections on which the associated quadratic form is negative definite. It may also be computed by counting the number of negative eigenvalues, counting multiplicity, of $-\mathcal{L}_\Sigma$.

One readily computes that if ${V}$ is a parallel vector field on $\mathbb{R}^{n+1}$ and ${V}^\perp $ is the orthogonal projection of this vector field onto $N\Sigma$ then one has
$$
-\mathcal{L}_\Sigma {V}^\perp = -k {V}^\perp.
$$
That is, when $V^\perp \neq 0$ it is an eigensection of $-\mathcal{L}_\Sigma$ with eigenvalue $-k<0$.

\section{Proof of Theorem \ref{MainThm}}
We prove Theorem \ref{MainThm} by averaging $Q_\Sigma$ over $J_N (V^\perp)$ where $V^\perp$ is the normal component of a parallel vector field $V$ of $\Real^{n+1}$.  For related techniques, see \cite{lawsonStableCurrentsTheir1973,Aminov, MicallefWolfson}.

We first note a useful identity. Fix a point $p\in \Sigma$ and choose $N_1, N_2$, to be an orthonormal frame of $N\Sigma$ near $p$ with 
$$
J_N(N_1)=N_2, J_N(N_2)=-N_1.
$$
If $\tilde{A}$ is the curvature term in the stability inequality then one readily computes that
$$
\tilde{A}(N_1)= |A_{N_1}|^2 N_1+\langle A_{N_1}, A_{N_2}\rangle  N_2, \tilde{A}(N_2)=\langle A_{N_1}, A_{N_2}\rangle N_1+ |A_{N_2}|^2  N_2.
$$
An immediate consequence is that
\begin{equation} \label{Intertwine}
\tilde{A}\circ J_N=J_N\circ \left(-\tilde{A}+|A|^2\tilde{I}\right)
\end{equation}
where here $\tilde{I}$ is the identity endomorphisms on the normal bundle.

\begin{proof}[Proof of Theorem \ref{MainThm}]
When $\Sigma$ is totally geodesic we are in Case (1).  Otherwise, as $\Sigma$ is smooth, the cone of $\Sigma$ can't split a line and so when $V$ is a non-zero parallel vector field on $\Real^{n+1}$,  ${V}^\perp$ is non-trivial.  In particular, the space of eigensections of $-\mathcal{L}_\Sigma$ with eigenvalue $-(n-2)$ has dimension at least $n+1$.  Hence, in this case if there is a section, $W$, so that 
$$Q_{\Sigma}[W,W]<-(n-2)\int_\Sigma |W|^2,
$$
then the Morse index is at least $n+2$ and we are in Case (3). 
As such, in what follows we may suppose $V^\perp$ is non-trivial and $-(n-2)$ is the lowest eigenvalue of $-\mathcal{L}_\Sigma$. 
     
As $V^\perp$ is an eigensection with eigenvalue $-(n-2)$, 
$$
-(n-2)\int_\Sigma |V^\perp|^2 =\int_{\Sigma}	|\nabla^\perp V^\perp |^2-(n-2) |V^\perp|^2-V^\perp \cdot \tilde{A}(V^\perp).
$$
and so
$$
\int_{\Sigma} V^\perp \cdot \tilde{A}(V^\perp)=\int_{\Sigma}	|\nabla^\perp V^\perp |^2.
$$

By the eigenvalue assumption,  for any section $W$ of $N\Sigma$,
$$
-(n-2) \int_\Sigma |W|^2 \leq \int_{\Sigma}	|\nabla^\perp W |^2-(n-2) |W|^2-W \cdot \tilde{A}(W) .
$$
That is, for a non-trivial section, $W$,
$$
\int_\Sigma W \cdot\tilde{A}(W) \leq \int_{\Sigma}	|\nabla^\perp W |^2,
$$
with equality only when $W$ is an eigensection of $-\mathcal{L}_\Sigma$ with eigenvalue $-(n-2)$. 

Fix $E_1, \ldots, E_{n+1}$ an orthonormal frame of parallel vector fields on $\Real^{n+1}$. Using \eqref{Intertwine}, the fact that $J_N$ is parallel and the fact that the $E_i^\perp$ are eigensections of $-\mathcal{L}_\Sigma$, yields
\begin{align*}
	\int_\Sigma |A|^2&=\sum_{i=1}^{n+1}	\int_\Sigma J_N(E_i^\perp)\cdot\tilde{A}(J_N(E_i^\perp))  \leq\sum_{i=1}^{n+1}\int_{\Sigma}	|\nabla^\perp J_N(E_i^\perp)|^2\\
&=\sum_{i=1}^{n+1}\int_{\Sigma}	|\nabla^\perp E_i^\perp|^2=\sum_{i=1}^{n+1}\int_{\Sigma} E_i^\perp \cdot \tilde{A}(E_i^\perp) =	\int_\Sigma |A|^2. 
\end{align*}
That is,  we have equality throughout and so $J_N (V^\perp)$ is a eigensection of $-\mathcal{L}_\Sigma$ with eigenvalue $-(n-2)$ for all parallel non-zero vectors in $\Real^{n+1}$. Using \eqref{Intertwine},
$$
-(n-2) J_N V^\perp= -\mathcal{L}_\Sigma J_N V^\perp=-J_N (\mathcal{L}_\Sigma V^\perp)+|A|^2 J_N V^\perp -2 J_N \tilde{A}(V^\perp)
$$
$$
=-(n-2) J_N V^\perp+|A|^2 J_N V^\perp -2 J_N \tilde{A}(V^\perp).
$$
Hence, for all $V$,
$
\tilde{A}(V^\perp)=\frac{1}{2} |A|^2 V^\perp.
$
and so
$$
\tilde{A}=\frac{1}{2} |A|^2 \tilde{I}.
$$

Finally, by the assumptions from the beginning, either,  for any parallel vector fields, $V$,  $J_N(V^\perp)=W^\perp$ for some parallel vector field, $W$, or the multiplicity of the $-(n-2)$ eigenvalue is at least $n+2$ and we are in Case (3).  If the former occurs, then as $\Sigma$ is not totally geodesic, Lemma \ref{CpxVarietyLem} implies and we are in Case (2).
\end{proof}

\begin{lem}\label{CpxVarietyLem}
	For $n\geq 3$, suppose $\Sigma^{n-2}\subset \mathbb{S}^n$ is smooth closed oriented and connected submanifold, possibly immersed, so that, for every parallel vector field on $\Real^{n+1}$,  $V$,  $J_{N} (V^\perp) =W^\perp$ for some parallel vector field $W$ on $\Real^{n+1}$.  Then, either
	\begin{enumerate}
		\item $\Sigma$ is a totally geodesic sphere;
		\item $n=2m+1$ is odd and away from the origin $C(\Sigma)$ is an immersed complex submanifold for some choice of parallel complex structure.
	\end{enumerate}
\end{lem}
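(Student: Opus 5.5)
The plan is to turn the pointwise hypothesis into a global linear map on $\Real^{n+1}$ and show that this map is a parallel complex structure preserving the cone. Fix $p\in\Sigma$, viewed as a unit vector in $\Real^{n+1}$. Since $\Sigma$ is a submanifold of the sphere, $\Real^{n+1}=\Real p\oplus T_p\Sigma\oplus N_p\Sigma$, and the map $V\mapsto V^\perp$ is the orthogonal projection onto $N_p\Sigma$.

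\textbf{Step 1: a linear map on parallel fields.} Define $\Phi:\Real^{n+1}\to\Real^{n+1}$ on parallel vector fields by choosing $W$ with $W^\perp=J_N(V^\perp)$. The choice of $W$ is ambiguous only up to the kernel
$$K=\{W : W^\perp\equiv 0 \text{ on } \Sigma\}.$$
\begin{itemize}
\item If $K\neq 0$, then for a nonzero $W\in K$ the field $W$ is tangent to the cone $C(\Sigma)$ at every point. The cone is then invariant under translation in the direction $W$, i.e.\ it splits a line. Since $C(\Sigma)$ is smooth away from the origin, this forces $C(\Sigma)$ to be a plane, so $\Sigma$ is a totally geodesic sphere. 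This is Case (1).
\item Otherwise $K=0$, and $\Phi$ is a well-defined linear map.
\end{itemize}
From now on assume $K=0$. Then $(\Phi V)^\perp=J_N(V^\perp)$ pointwise, so
$$(\Phi^2V)^\perp=J_N^2(V^\perp)=-V^\perp.$$
Injectivity of $V\mapsto V^\perp$ on parallel fields (the statement $K=0$) gives $\Phi^2=-I$. Hence $n+1$ is even and $n=2m+1$ is odd.

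\textbf{Step 2: $\Phi$ is orthogonal and preserves the cone.} Next I need $\Phi$ to be orthogonal, so that it is a parallel complex structure, and I need $T_pC(\Sigma)=\Real p\oplus T_p\Sigma$ to be $\Phi$-invariant, or equivalently $N_p\Sigma$ to be $\Phi$-invariant. The main tool is differentiation along $\Sigma$. Let $\bar D$ denote the flat connection. For $X$ tangent to $\Sigma$ and a constant vector $V$, the Weingarten formula gives
$$\nabla^\perp_X V^\perp = -\big(A(X,V^T)\big) \quad\text{(normal part of } \bar D_X V^T\text{)}$$
up to sign conventions, where $V^T$ is the component of $V$ tangent to $T_p\Sigma$. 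The $p$-component plays no role here, since $\bar D_X p = X$ is tangent.

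Apply this to the identity $(\Phi V)^\perp=J_N V^\perp$ and use that $J_N$ is parallel. This yields
$$A\big(X,(\Phi V)^T\big)=J_N\,A(X,V^T)\quad\text{for all } X,V.$$
Take $V\in N_p\Sigma\oplus\Real p$, so that $V^T=0$. Then $A(\cdot,(\Phi V)^T)=0$. I will then argue that $\Phi$ maps $N_p\Sigma$ into $N_p\Sigma$ and $p$ into $T_pC(\Sigma)$ at generic points. The relevant sets are:
\begin{itemize}
\item the set where $A$ is nondegenerate in the sense that $A(\cdot,Y)=0$ implies $Y=0$;
\item the complementary case, which is where the splitting or totally geodesic alternative should enter.
\end{itemize}

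Differentiating the $p$-direction separately, via $(p)^\perp=0$, gives $J_N(p^\perp)=0=(\Phi p)^\perp$ as long as $\Phi p$ is well defined modulo $K$. Hence $\Phi p\in T_pC(\Sigma)$ automatically, because
$$(\Phi p)^\perp=J_N(p^\perp)=0.$$
Orthogonality of $\Phi$ on the normal space follows from $J_N$ being an isometry. Orthogonality on the tangential part should follow by polarizing $\langle(\Phi V)^\perp,(\Phi W)^\perp\rangle=\langle V^\perp,W^\perp\rangle$ over all $p\in\Sigma$: the normal planes $N_p\Sigma$ sweep out enough of $\Real^{n+1}$ that the quadratic forms $V\mapsto|V^\perp|^2$ span the symmetric forms. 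If they did not, I expect that degeneracy to again force $\Sigma$ to be totally geodesic.

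\textbf{Step 3: conclusion, and the expected obstacle.} Once $\Phi$ is an orthogonal complex structure preserving $N_p\Sigma$ for every $p$, it also preserves the orthogonal complement $T_pC(\Sigma)$. So $C(\Sigma)$ is a complex (immersed) submanifold away from the origin, which is Case (2).

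The main obstacle is Step 2: deducing from the pointwise relation $(\Phi V)^\perp=J_NV^\perp$ that $\Phi$ preserves the normal planes and is isometric. I would first try the direct pointwise route. Apply the relation to $V\in N_p\Sigma$ to get
$$\Phi V = J_NV + (\Phi V)^{T} + c\,p,$$
then show the tangential and radial parts vanish using $\Phi^2=-I$ together with the differentiated relation above. If needed, I would fall back on the spanning argument: the normal planes along a non-totally-geodesic $\Sigma$ span a rich enough family that these parts must vanish.
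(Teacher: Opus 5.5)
Your Step 1 is the paper's opening argument: splitting when $K\neq0$, and otherwise $\Phi$ is well defined and linear with $\Phi^2=-I$, so $n$ is odd. The gap is Step 2. You leave it as a plan, and the plan both misses an easy step and aims at something that cannot be proved.

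\textbf{The easy step you missed.} Invariance of the cone is immediate and pointwise. If $V\perp N_p\Sigma$, then $V^\perp(p)=0$, so $(\Phi V)^\perp(p)=J_N(V^\perp(p))=0$. Thus $\Phi$ maps $T_pC(\Sigma)=(N_p\Sigma)^\perp$ into itself for every $p$, and $C(\Sigma)$ is already complex for $\Phi$. You noticed this only for $V=p$. In the paper it is the remark that $L$ preserves $(N_{p_1}\Sigma)^\perp$, and it is the substance of the final paragraph.

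No Weingarten formula is needed, and your genericity dichotomy is empty anyway. Your own identity gives $A(\cdot,(\Phi p)^T)=J_NA(\cdot,p^T)=0$ with $(\Phi p)^T\neq0$ (otherwise $p$ would be a real eigenvector of $\Phi$). So $A$ is degenerate at every point.

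Also, ``equivalently, $N_p\Sigma$ is $\Phi$-invariant'' is wrong unless $\Phi$ is orthogonal. Let $P_p$ be the orthogonal projection onto $N_p\Sigma$. Transposing $P_p\Phi=J_NP_p$ shows that it is $\Phi^T$ that preserves $N_p\Sigma$, acting there as $-J_N$. For $\nu\in N_p\Sigma$ you only know $\Phi\nu-J_N\nu\in T_pC(\Sigma)$. So ``orthogonality on the normal space follows from $J_N$ being an isometry'' is unsupported.

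\textbf{Orthogonality cannot be proved.} It does not follow from the lemma's hypotheses. Polarization gives $\Phi^TP_p\Phi=P_p$, hence $\Phi^T\Phi=I$ only when $I\in\operatorname{span}\{P_p\}$. Your hope that failure of this forces total geodesy is also false, as the following example shows.

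Let $\Sigma=\{(x,y)\in\mathbb{R}^3\times\mathbb{R}^3:|x|=r_1,\ |y|=r_2,\ x\cdot y=0\}\subset\mathbb{S}^5$ with $r_1^2+r_2^2=1$, a copy of $SO(3)$.
\begin{itemize}
\item At $p=(r_1u,r_2w)$, the vectors $\nu_1=(r_2u,-r_1w)$ and $\nu_2=(r_2w,r_1u)$ form an orthonormal basis of $N_p\Sigma$.
\item Orient $\Sigma$ so that $J_N\nu_1=\nu_2$, and set $L(a,b)=(-\tfrac{r_1}{r_2}b,\tfrac{r_2}{r_1}a)$.
\item Then $L^T\nu_1=-\nu_2$ and $L^T\nu_2=\nu_1$, so $(LV)^\perp=J_N(V^\perp)$ for every $V$.
\item $L^2=-I$ and $\Sigma$ is not totally geodesic.
\item Yet $L^TL=\mathrm{diag}(\tfrac{r_2^2}{r_1^2}I,\tfrac{r_1^2}{r_2^2}I)$, so $L$ is orthogonal only if $r_1=r_2$.
\end{itemize}
In particular $L\nu_1\neq\nu_2$. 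So the paper's own step ``$L(N_1^1)=N_1^2$'' does not follow from $(LN_1^1)^\perp=N_1^2$ at $p_1$ either.

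For $r_1\neq r_2$ this $\Sigma$ is not even minimal, since the volume of these $SO(3)$-orbits is proportional to $r_1r_2$. Hence $C(\Sigma)$ is complex for no orthogonal constant complex structure.

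The lemma is therefore correct only if ``parallel complex structure'' means a constant $L$ with $L^2=-I$. In that reading, your Step 1 plus the pointwise invariance above is a complete proof. Obtaining a Hermitian structure requires information beyond the lemma's hypotheses, for example minimality, which this example lacks.
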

\begin{proof}
	Suppose $V$ is non-zero and $V^\perp$ identically vanishes.  This means that $V$ is tangent to $C(\Sigma)$ everywhere and so $C(\Sigma)$ splits a line.  As $\Sigma$ is smooth this can only happen if $\Sigma$ is a totally geodesic sphere and the claim is trivial in this case.
		Hence, we may assume $V^\perp$ is non-zero for all non-zero $V$. Together with the hypothesis this implies there is a well defined map $L:\Real^{n+1}\to \Real^{n+1}$ determined by $J_N(V^\perp)=(L(V))^\perp$.

We claim that $L$ is linear and $L\circ L=-I$ and so $n$ must be odd.  Indeed,
    \begin{align*}
    (L(V+W)&-L(V)-L(W))^\perp=(L(V+W))^\perp-L(V)^\perp-L(W)^\perp \\
    &=J_N((V+W)^\perp)-J_N(V^\perp)-J_N(W^\perp)=0.
    \end{align*}
By our assumptions this only occurs when $L(V+W)=L(V)+L(W)$.  The other claims follow in the same fashion.

We next show $L$ is orthogonal. To that end pick a point $p_1\in \Sigma$ and choose $N_1^1$ and $N_1^2$ in $\Real^{n+1}$ that form an orthonormal basis of $N_{p_1}\Sigma$ and so $J_N(N_1^1)=N_1^2$.  It follows that $L(N_1^1)=N_1^2$ and $L(N_1^2)=-N_1^1$ and for any $V$ orthogonal to $N_{p_1}\Sigma$, $L(V)$ is also orthogonal to $N_{p_1} \Sigma$.  That is, $L$ respects the orthogonal splitting of $N_{p_1} \Sigma \oplus (N_{p_1}\Sigma)^\perp$ and is an orthogonal transformation on the first factor.

Now pick a $p_2$ so $N_{p_2}\Sigma$ is not the same subspace as $N_{p_1}\Sigma$ -- such a $p_2$ exists as otherwise any $V$ orthogonal to $N_{p_1}\Sigma$ would have $V^\perp=0$ which violates our assumption.  Pick $N_2^1, N_2^2$ that form an orthornormal basis of $N_{p_2} \Sigma$  and so $J_N(N_2^1)=N_2^2$ and hence $L(N_2^1)=N_2^2$.  This implies $N_{p_2}\Sigma\cap N_{p_1} \Sigma=\set{0}$.

Let us write $N_2^i= V^i+W^i$ where $V^i\in N_{p_1}\Sigma$ and $W^i$ is orthogonal to $N_{p_1}\Sigma$.  From the properties of $L$ observed above $L(V^i)\cdot L(V^j)=V^i\cdot V^j$ and $L(V^i)\cdot L(W^j)=0$.  By the choices made and fact that $J_N$, and hence $L$, is orthogonal on $N_{p_2}\Sigma$ we have
$$
(V^i+W^i )\cdot (V^j+W^j)=L(V^i+W^i)\cdot L(V^j+W^j)=V^i \cdot V^j + L(W^i)\cdot L(W^j)
$$
It follows that 
$$
W^i\cdot W^j =L(W^i)\cdot L(W^j).
$$
Hence, $N_{p_1}\Sigma \oplus N_{p_2}\Sigma$ is four dimensional with basis $N_1^1, N_2^1, W^1, W^2$ and $L$ maps it to itself orthogonally.  
We inductively pick points $p_1, \ldots, p_m$ to obtain a decomposition $\Real^{n+1}=N_{p_1}\Sigma \oplus \cdots \oplus N_{p_m} \Sigma$
and inductively verify that $L$ is orthogonal.

	Now fix a point $p\in \Sigma$.  The position vector of $p$, $X(p)\in \mathbb{R}^{n+1}$, satisfies $X(p)^\perp =0$ at $p$.  Hence,  $(L(X(p)))^\perp=0$ at $p$ and so $L(X(p))$ is tangent to $\Sigma$ at $p$.  Similarly,   if $Z\in T_p\Sigma$ is orthogonal to $L(X(p))$, then $L(Z)$ is orthogonal to $X(p)$.  Moreover, as $Z^\perp =0$ we have $(L(Z))^\perp=J_N(Z^\perp)=0$  at $p$ and so $L(Z)\in T_p \Sigma$.    It immediately follows that $L$ preserves the tangent space of $C(\Sigma)$.  That is, $C(\Sigma)$ is a complex submanifold with respect to the parallel complex structure determined by $L$.
\end{proof}

\section{Small eigenvalues of the Laplacian and the Morse index}
In this section we show that a minimal submanifold with many small eigenfunctions of the Laplace operator has high Morse index.  This is used to give an example of an immersed minimal submanifold in $\mathbb{S}^5$ of high Morse index whose cone is an immersed complex submanifold away from the origin.

First, recall that for any $k$-dimensional minimal submanifold $\Sigma^k\subset \mathbb{S}^n\subset \Real^{n+1}$ the coordinate functions $x_i$ restrict to eigenfuctions of $-\Delta_{\Sigma}$ with eigenvalue $k$.  That is,
$$
-\Delta_{\Sigma} x_i = k x_i.
$$
Let $N_\Sigma^*$ be the number of eigenvalues (counting multiplicity) of $-\Delta_\Sigma$ strictly below $k$.  

We show the Morse index is large when there are many small eigenfunctions:
\begin{prop}\label{EigenvalueProp}
	Suppose $\Sigma^{k}\subset \mathbb{S}^n$ is smooth and minimal and $N_{\Sigma}^*> (n+1)^2$, then $\Sigma$ has Morse index strictly greater than $n+1$.
	More generally, if $N_{\Sigma}^*> (n+1)l$ for $l\geq n+1$, then $\Sigma$ has Morse index strictly greater than $l$.
\end{prop}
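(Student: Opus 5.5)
Our approach is to build test sections of the normal bundle of the form $\phi V^\perp$, where $\phi$ is a low Laplace eigenfunction and $V$ is a parallel vector field on $\Real^{n+1}$. We then show that, after averaging over an orthonormal frame $E_1,\ldots,E_{n+1}$, the quadratic form $Q_\Sigma$ is negative on a large subspace.

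The first step is the pointwise computation. For a parallel field $V$, decompose $V = V^T + V^\perp + (V\cdot X)X$, with $X$ the position vector. For a function $\phi$ and the section $W=\phi E_i^\perp$, expand
$$
Q_\Sigma[\phi E_i^\perp,\phi E_i^\perp]=\int_\Sigma |\nabla\phi|^2|E_i^\perp|^2+2\phi\nabla\phi\cdot\tfrac12\nabla|E_i^\perp|^2+\phi^2\left(|\nabla^\perp E_i^\perp|^2-k|E_i^\perp|^2-E_i^\perp\cdot\tilde A(E_i^\perp)\right).
$$
Integrate the cross term by parts. Then use the eigen-equation $-\mathcal{L}_\Sigma E_i^\perp=-kE_i^\perp$, in the form $\tfrac12\Delta|E_i^\perp|^2=|\nabla^\perp E_i^\perp|^2-E_i^\perp\cdot\tilde A(E_i^\perp)-k|E_i^\perp|^2$. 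This gives
$$
Q_\Sigma[\phi E_i^\perp,\phi E_i^\perp]=\int_\Sigma |\nabla\phi|^2|E_i^\perp|^2-k\phi^2|E_i^\perp|^2 .
$$
Summing over $i$ and using $\sum_i|E_i^\perp|^2=n-k$ (the codimension), we get
$$
\sum_{i=1}^{n+1}Q_\Sigma[\phi E_i^\perp,\phi E_i^\perp]=(n-k)\int_\Sigma|\nabla\phi|^2-k\phi^2 .
$$
This is negative whenever $\phi$ lies in the span $\mathcal{E}$ of eigenfunctions of $-\Delta_\Sigma$ with eigenvalue $<k$. Indeed, on $\mathcal{E}$ we have $\int|\nabla\phi|^2<k\int\phi^2$ for $\phi\neq0$. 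The same identity holds for the polarized form in $\phi$, so $B(\phi,\psi):=\sum_i Q_\Sigma[\phi E_i^\perp,\psi E_i^\perp]$ is negative definite on $\mathcal{E}$, which has $\dim\mathcal E=N^*_\Sigma$.

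The second step is a counting argument that turns this averaged negativity into index. Suppose $\mathrm{Ind}(\Sigma)\le l$. Let $U$ be the span of the nonpositive-eigenvalue eigensections of $-\mathcal{L}_\Sigma$; we actually need the negative ones, handled as below. Consider the linear map $\Phi:\mathcal{E}\to (\Gamma(N\Sigma))^{n+1}$, $\phi\mapsto(\phi E_1^\perp,\ldots,\phi E_{n+1}^\perp)$. Composing with $L^2$-projection onto the negative eigenspace $U^-$ in each of the $n+1$ slots gives a map into a space of dimension at most $(n+1)l$. Since $N^*_\Sigma>(n+1)l$, there is $\phi\neq 0$ in $\mathcal E$ with every $\phi E_i^\perp$ $L^2$-orthogonal to $U^-$. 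Each such section then has $Q_\Sigma[\phi E_i^\perp,\phi E_i^\perp]\ge0$, so the sum is $\ge0$, contradicting the strict negativity above. Hence $\mathrm{Ind}(\Sigma)>l$. Taking $l=n+1$ gives the first claim, and the general claim is identical. The hypothesis $l\ge n+1$ is not needed for this argument, but it is harmless.

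The main obstacle is the first step: carefully verifying that the cross term and the curvature terms combine exactly via the eigensection equation into $\tfrac12\Delta|E_i^\perp|^2$. This requires the Bochner-type identity $\tfrac12\Delta|W|^2=|\nabla^\perp W|^2+W\cdot\Delta^\perp W$. One must also check $\sum_i|E_i^\perp|^2=n-k$ pointwise, which holds because the $E_i$ are an orthonormal basis and $N_p\Sigma$ has dimension $n-k$. A secondary subtlety is that $\phi E_i^\perp$ might vanish for some $\phi$; this causes no harm, since only the sum over $i$ enters the argument.
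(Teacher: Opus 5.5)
Your argument is correct and is essentially the paper's. It uses the same test sections $fE_i^\perp$, the same identity $Q_\Sigma[fV^\perp,fV^\perp]=\int_\Sigma(|\nabla f|^2-kf^2)|V^\perp|^2$ summed to $(n-k)\int_\Sigma |\nabla f|^2-kf^2$, and the same dimension count; your projection onto $U^-$ is just the general-$l$ version of the paper's $(n+1)^2$ conditions $\int_\Sigma fE_i^\perp\cdot E_j^\perp=0$, which the paper leaves to ``the same method.''

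One slip needs fixing. Since $\mathcal{L}_\Sigma E_i^\perp=kE_i^\perp$ means $\Delta^\perp E_i^\perp=-\tilde{A}(E_i^\perp)$, the correct identity is $\tfrac12\Delta|E_i^\perp|^2=|\nabla^\perp E_i^\perp|^2-E_i^\perp\cdot\tilde{A}(E_i^\perp)$, with no $-k|E_i^\perp|^2$ term. With the extra term you wrote, the computation would actually yield $\int_\Sigma|\nabla\phi|^2|E_i^\perp|^2$ rather than the correct formula you state.
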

Combining this with a result of Korevaar \cite{korevaarUpperBoundsEigenvalues1993} yields the following:
\begin{cor}\label{HighIndexCor}
	If $\Sigma^2\subset \mathbb{S}^{n}$ is a closed orientable minimal surface, possible immersed, of area $A$ and genus $g$, then
	$$
	\mathrm{Ind}(\Sigma)\geq \frac{ C A}{(g+1)(n+1)^2}.
	$$
	for some universal constant $C>0$.
	
	In particular, if $\Sigma$ is a minimal torus then any there is a cover, ${\Sigma'}$, of $\Sigma$ of arbitrarily high Morse index.
\end{cor}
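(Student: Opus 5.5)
The plan is to combine Proposition \ref{EigenvalueProp} with Korevaar's eigenvalue upper bound. For a closed orientable surface of genus $g$ with a metric of area $A$, Korevaar \cite{korevaarUpperBoundsEigenvalues1993} gives a universal constant $C_0$ such that
$$
\lambda_j(\Sigma)\leq C_0\frac{(g+1)\, j}{A}\quad\text{for all } j\geq 1 .
$$
This holds for the induced metric of an immersion, since only the conformal class and the area enter. With $k=2$, every index $j$ satisfying $C_0(g+1)j/A<2$ has $\lambda_j<2$. Counting these gives
$$
N_\Sigma^*\geq \left\lfloor \frac{2A}{C_0(g+1)}\right\rfloor \geq \frac{A}{C_0(g+1)}-1 .
$$

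Next we apply the general form of Proposition \ref{EigenvalueProp}. Set $l=\lceil N_\Sigma^*/(n+1)\rceil-1$, so that $N_\Sigma^*>(n+1)l$. If $l\geq n+1$, the proposition gives $\mathrm{Ind}(\Sigma)>l\geq N_\Sigma^*/(n+1)-1$. Since $l\geq n+1$, the right-hand side is comparable to $N_\Sigma^*/(n+1)$; in particular it is at least $N_\Sigma^*/(2(n+1))$. Combining with the previous step yields
$$
\mathrm{Ind}(\Sigma)\gtrsim \frac{A}{(g+1)(n+1)}\geq \frac{A}{(g+1)(n+1)^2}.
$$

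If instead $l<n+1$, then $N_\Sigma^*\leq (n+1)^2$, so $A/(g+1)\lesssim (n+1)^2$. The right-hand side $CA/((g+1)(n+1)^2)$ is then at most a fixed constant. The bound follows once $C$ is small enough, because any non-totally-geodesic $\Sigma$ has index at least $n+1\geq 1$, and a totally geodesic $\mathbb{S}^2$ has index $n-2\geq 1$ when $n\geq 3$. The case $n=2$ is trivial: there the index is at least $1$ and $A=4\pi$. Tracking constants in the floor and ceiling losses is the only fiddly point, and absorbing them into $C$ is routine. This explains the weaker $(n+1)^2$ in the statement; it is not sharp, but it is enough.

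For the torus claim, let $\Sigma$ be a minimal torus, so its universal cover is $\mathbb{R}^2$ with lattice $\Lambda$. For each $d$, the sublattice $d\Lambda$ gives a $d^2$-sheeted covering $\Sigma'_d\to\Sigma$. Composing with the immersion, $\Sigma'_d$ is again a closed orientable immersed minimal torus in $\mathbb{S}^n$, of genus $1$ and area $d^2A$. The first part then gives
$$
\mathrm{Ind}(\Sigma'_d)\geq \frac{Cd^2A}{2(n+1)^2}\to\infty .
$$
The only real input is Korevaar's bound. The key feature is that it depends on genus rather than on the conformal structure, which is what makes it apply uniformly to the covers.
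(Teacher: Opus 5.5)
Your proposal is correct and follows the paper's route: Korevaar's bound $\lambda_j A\le C(g+1)j$ supplies many eigenvalues below $2$, the general form of Proposition \ref{EigenvalueProp} turns that count into an index bound, and the $d^2$-sheeted covers of a torus keep genus $1$ while the area grows. One small correction: for $n=2$ the surface is the round $\mathbb{S}^2$ with zero normal bundle, so its index is $0$, not at least $1$, and the stated inequality actually fails there; read the corollary with $n\geq 3$ and drop that sentence.
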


Before proving Proposition \ref{EigenvalueProp}
we record a simple computational lemma.
\begin{lem}\label{ComputationLem}
	Let $\Sigma^k \subset \mathbb{S}^n$ be a minimal submanifold, $\phi$ an section of the normal bundle and $f\in C^\infty(\Sigma)$ a smooth function.  We have
	\begin{align*}
	Q_\Sigma[f\phi, f\phi]&=  \int_{\Sigma}f^2 \left( |\nabla^\perp \phi|^2 -k |\phi|^2-\phi \cdot \tilde{A}(\phi) \right) +(-f \Delta_\Sigma f) |\phi|^2\\
	&=\int_{\Sigma} f^2 (-\phi\cdot \mathcal{L}_\Sigma \phi)+ |\nabla f|^2 |\phi|^2.
	\end{align*}
\end{lem}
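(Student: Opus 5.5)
The plan is a direct computation: expand $Q_\Sigma[f\phi,f\phi]$ using the formula $Q_\Sigma[W,W]=\int_\Sigma |\nabla^\perp W|^2-k|W|^2-W\cdot\tilde A(W)$ from the background section. The zeroth-order terms cause no difficulty. Since $\tilde A$ is a pointwise (tensorial) endomorphism, we have $f\phi\cdot\tilde A(f\phi)=f^2\,\phi\cdot\tilde A(\phi)$ and $k|f\phi|^2=f^2k|\phi|^2$. All the work is therefore in the gradient term.

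For the gradient term, use the Leibniz rule $\nabla^\perp_{E_i}(f\phi)=(E_if)\phi+f\nabla^\perp_{E_i}\phi$, which gives
\begin{equation*}
|\nabla^\perp (f\phi)|^2 = |\nabla f|^2|\phi|^2 + f^2|\nabla^\perp\phi|^2 + \tfrac12\nabla(f^2)\cdot\nabla|\phi|^2 .
\end{equation*}
Here the cross term $2f\sum_i (E_if)\,\phi\cdot\nabla^\perp_{E_i}\phi$ is rewritten using metric compatibility of $\nabla^\perp$, namely $\phi\cdot\nabla^\perp_{E_i}\phi=\tfrac12E_i|\phi|^2$. The key step is to integrate by parts on the closed manifold $\Sigma$ in two different ways to obtain the two forms of the identity.

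\textbf{First form.} Write $|\nabla f|^2|\phi|^2+f\,\nabla f\cdot\nabla|\phi|^2=\nabla f\cdot\nabla(f|\phi|^2)$. Integrating this by parts gives $\int_\Sigma(-\Delta_\Sigma f)f|\phi|^2$, which together with the zeroth-order terms yields the first line.

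\textbf{Second form.} Instead integrate the cross term as $\int_\Sigma\tfrac12\nabla(f^2)\cdot\nabla|\phi|^2=-\int_\Sigma \tfrac12 f^2\Delta_\Sigma|\phi|^2$. Then use the Bochner-type identity $\tfrac12\Delta_\Sigma|\phi|^2=|\nabla^\perp\phi|^2+\phi\cdot\Delta^\perp\phi$, which follows from metric compatibility of $\nabla^\perp$. The $f^2|\nabla^\perp\phi|^2$ terms then cancel, and we are left with
\begin{equation*}
\int_\Sigma |\nabla f|^2|\phi|^2 - f^2\bigl(\phi\cdot\Delta^\perp\phi + k|\phi|^2+\phi\cdot\tilde A(\phi)\bigr) = \int_\Sigma |\nabla f|^2|\phi|^2 + f^2(-\phi\cdot\mathcal L_\Sigma\phi).
\end{equation*}

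There is no real obstacle here. The only points needing care are the sign conventions ($\Delta_\Sigma$ is the nonpositive Laplacian, matching $-\Delta_\Sigma x_i=kx_i$) and the verification of the Bochner identity for $|\phi|^2$ in the normal bundle, which is routine from compatibility of $\nabla^\perp$ with the metric.
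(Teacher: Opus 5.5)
Your computation is correct. The paper states this lemma as a ``simple computational lemma'' without writing out a proof, and your argument is the routine verification it leaves implicit: expand $|\nabla^\perp(f\phi)|^2$ by the Leibniz rule and metric compatibility, then integrate by parts on the closed $\Sigma$, once against $f|\phi|^2$ and once using $\frac12\Delta_\Sigma|\phi|^2=|\nabla^\perp\phi|^2+\phi\cdot\Delta^\perp\phi$; this produces both lines of the identity with the paper's sign conventions.
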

\

\begin{proof}[Proof of Proposition \ref{EigenvalueProp}]
For any function, $f$, and parallel vector field, $V$,
Lemma \ref{ComputationLem} yields
$$
Q_\Sigma[f V^\perp, f V^\perp]=\int_\Sigma f^2(-V^\perp  \mathcal{L}_\Sigma V^\perp)+|\nabla f|^2 |V^\perp|^2. =\int_{\Sigma} (|\nabla f|^2-k f^2) |V^\perp|^2.
$$

Let $\mathcal{E}(\mu)$ be the space of eigenfuctions of $-\Delta_\Sigma$ with eigenvalue $<\mu$ so $\dim \mathcal{E}(k)=N_{\Sigma}^*$.  Fix an orthonormal basis, $E_1, \ldots, E_{n+1}$,  of $\Real^{n+1}$ and consider the linear map
$$
I: \mathcal{E}(k) \to \Real^{(n+1)\times (n+1)}, f\mapsto \left[\int_{\Sigma} f E_i^\perp \cdot E_j^\perp  \right]_{ij}.
$$
If $\dim \mathcal{E}(k)=N_\Sigma^* >(n+1)^2$, then the kernel of the map is at least one-dimensional and so there is a non-zero element $f\in \mathcal{E}(k)$ with $I(f)=0$.  For some $\mu<k$, $f$ satisfies 
$$
\int_{\Sigma} |\nabla f|^2 -k f^2 \leq (\mu-k) \int_{\Sigma} f^2 <0.
$$
.

If $\mathrm{Ind}(\Sigma)\leq n+1$, then, as $E_i^\perp$ span the space of negative eigensections of $\mathcal{L}_\Sigma$ and, for each $1\leq j \leq n+1$,  $f E_j^\perp$ is orthogonal to every element of this space, 
$$
0\leq Q_\Sigma[f E_j^\perp, f E_j^\perp].
$$
Hence, 
$$
0\leq \sum_{i}\int_\Sigma Q_\Sigma[f E_i^\perp, f E_i^\perp]=(n-k)\int_{\Sigma}|\nabla f|^2 -k  f^2  \leq (n-k) (\mu-k) \int_\Sigma f^2<0.
$$
This is a contradiction and proves that the Morse index is strictly greater than $n+1$.  The general result follows by the same method.
\end{proof}

Finally, we give an example clarifying Case (2) of Theorem \ref{MainThm}.
\begin{prop}\label{LargeIndexVarietyProp}
	There exists a minimally immersed three-manifold, $\Sigma^3\subset \mathbb{S}^5\subset \Real^{6}$, with $\mathrm{Ind}(\Sigma)>6$ and so $C(\Sigma)$ is an immersed complex submanifold away from the origin.
\end{prop}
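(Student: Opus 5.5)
The natural route is to combine Corollary \ref{HighIndexCor} (or directly Proposition \ref{EigenvalueProp}) with a covering construction. I would start from a minimal three-manifold $\Sigma_0^3\subset\mathbb{S}^5$ whose cone is a complex surface in $\mathbb{C}^3$. The cleanest choice is the link of a homogeneous complex cone. Concretely, take the quadric cone $\{z_1^2+z_2^2+z_3^2=0\}$, or better the cone $\{z_1 z_2 = z_3^2\}$. Either link is a homogeneous $3$-manifold (a lens space $\mathbb{RP}^3$ type quotient of $S^3$), and it is minimal in $\mathbb{S}^5$ because the cone is a complex, hence minimal, variety. Since the link of a complex cone is invariant under the Hopf $S^1$-action $z\mapsto e^{i\theta}z$, I would instead work with a link that has infinite fundamental group, or at least admits covers of arbitrarily large degree. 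The simplest is the flat-type example: the cone over the Clifford-like link of $\{z_1z_2z_3\text{-type}\}$ is singular, so instead use the product cone $\mathbb{C}\times C(\gamma)$... but this has a singular link. So I would prefer the following: take the immersed complex cone $\{(s\,a(w))\}$ where the link is $S^1\times T^2$-like, namely the Legendrian lift picture. Every complex cone over a curve $\Gamma\subset\mathbb{CP}^2$ has link equal to the Hopf circle bundle over $\Gamma$. Taking $\Gamma$ a smooth (or immersed) elliptic curve, e.g.\ the Fermat cubic, the link $\Sigma_0$ is a circle bundle over a torus, and $\pi_1(\Sigma_0)$ surjects onto $\mathbb{Z}^2$.

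Next I would pass to finite covers. Pulling back along the degree-$d^2$ covers $T^2\to T^2$ gives covers $\Sigma_d\to\Sigma_0$. Composing with the immersion gives minimal immersions $\Sigma_d\to\mathbb{S}^5$ whose cones are still immersed complex submanifolds away from $0$, since this property is local. I would then show that $N^*_{\Sigma_d}\to\infty$. Functions pulled back from the base torus $\Gamma_d$ that are constant on Hopf fibres satisfy $-\Delta_{\Sigma_d}f=-\Delta_{\Gamma_d}f$ up to the Riemannian submersion structure; the fibres are geodesic circles of constant length, so the submersion is harmonic. The pulled-back metric on $\Gamma_d$ is a $d^2$-fold cover of a fixed flat-conformal torus of fixed area. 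By Weyl asymptotics, or by explicit test functions $\cos(\langle \xi,x\rangle)$ with small frequencies on the large torus, the number of eigenvalues of $\Gamma_d$ below $3=k$ grows like $d^2$. This is the same mechanism as in the torus statement of Corollary \ref{HighIndexCor}, now in one dimension higher via the submersion.

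Once $N^*_{\Sigma_d}>36=(n+1)^2$, Proposition \ref{EigenvalueProp} gives $\mathrm{Ind}(\Sigma_d)>6$, and I take $\Sigma=\Sigma_d$.

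The main obstacle is the eigenvalue count. The base metric on $\Gamma$ is induced from Fubini--Study, so it is only conformally flat, and I must make sure that basic functions give eigenfunctions, or at least Rayleigh quotients, of the total space with the same value. I would avoid needing exact eigenfunctions by a min-max argument. A $d^2$-dimensional space of basic test functions has Rayleigh quotient on $\Sigma_d$ equal to its Rayleigh quotient on $\Gamma_d$ with weight given by the constant fibre length. That quotient is $O(1/d^2)$ for low-frequency modes, via comparison with a fixed flat metric, using a bounded conformal factor and the conformal invariance of the Dirichlet energy in dimension two.
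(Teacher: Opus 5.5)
Your proof is correct, and its skeleton is the paper's. You take the Hopf circle bundle $\pi^{-1}(\Gamma)\subset\mathbb{S}^5$ over a complex curve $\Gamma\subset\mathbb{CP}^2$, pass to covers of $\Gamma$ to get more than $36$ Laplace eigenvalues below $3$, lift them to the total space, and apply Proposition \ref{EigenvalueProp}.

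You differ from the paper in both analytic inputs.
\begin{itemize}
\item Where the paper cites Korevaar's eigenvalue bound on the base, you use explicit low-frequency Fourier modes on the $d\times d$ cover of the flat torus conformal to $\Gamma$. You combine these with conformal invariance of the Dirichlet energy and a bound on the conformal factor.
\item Where the paper cites Watson's theorem (basic eigenfunctions of a Riemannian submersion with minimal fibres are eigenfunctions upstairs), you only compare Rayleigh quotients of basic test functions and use min--max. This needs nothing beyond the submersion being Riemannian with fibres of constant length $2\pi$.
\end{itemize}
Your version is more self-contained and yields only inequalities, which is all Proposition \ref{EigenvalueProp} requires. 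The paper's citations buy brevity.

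Note also that the paper's proof as written starts from a genus zero curve. Such a curve is simply connected, so it has no nontrivial connected covers. Your elliptic curve is what the covering argument actually needs, consistent with the torus case of Corollary \ref{HighIndexCor}.

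Two small clean-ups:
\begin{itemize}
\item Drop the opening detours and go straight to the Fermat cubic. For instance, the quadric cone's link $\mathbb{RP}^3$ has finite fundamental group, so it cannot supply large covers.
\item Your claim that a $d^2$-dimensional space of basic functions has Rayleigh quotient $O(d^{-2})$ is overstated. Only boundedly many modes have quotient $O(d^{-2})$, while modes with $|\xi|\le R$ give roughly $R^2d^2$ functions with quotient $\lesssim R^2$. Either version suffices, since you only need $37$ functions with quotient below $3$.
\end{itemize}
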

\begin{proof}
	Fix a genus zero complex curve, $C_0$ in $\mathbb{CP}^2$.  Let $\pi: \mathbb{S}^5\to \mathbb{CP}^2$ be the Hopf fibration.  Note that this means $\pi^{-1}(C_0)=\Sigma$ is a minimal submanifold with $C(\Sigma)$ a complex variety.  By \cite{korevaarUpperBoundsEigenvalues1993}, the Laplace operator of any sufficiently large cover of $C_0$, $C_0'$, has many eigenfunctions of eigenvalue less than $3$.  As the fibers of $\pi$ are geodesics,  a result of Watson \cite{watsonManifoldMapsCommuting1973} implies $\Sigma'=\pi^{-1}(C_0')$ also has many eigenfunctions with eigenvalue less than $3$ and so the Morse index can be made as large as one likes by Proposition \ref{EigenvalueProp}.
\end{proof}
\bibliographystyle{unsrt}
\bibliography{export-data}

\end{document}